  \def\NN{{\mathbb N}}
  \def\ZZ{{\mathbb Z}}
  \def\kk{{\mathbf k}}
  \def\cC{{\mathcal C}}
  \def\sm{\smallsetminus}
  \theoremstyle{plain}
    \newtheorem{theorem}{Theorem}[section]
    \newtheorem{proposition}[theorem]{Proposition}
    \newtheorem{corollary}[theorem]{Corollary}
    \newtheorem{conjecture}[theorem]{Conjecture}
  \theoremstyle{definition}
  \numberwithin{equation}{section}
\begin{document}

\title[On the flag $f$-vector of a lattice with nontrivial homology]{On 
the flag $f$-vector of a graded lattice with nontrivial homology}

  \author{Christos~A.~Athanasiadis}
  \address{Department of Mathematics
          (Division of Algebra-Geometry) \\
          University of Athens \\
          Panepistimioupolis, Athens 15784 \\
          Hellas (Greece)}
\email{caath@math.uoa.gr}

\date{May 14, 2011}
\keywords{Flag $f$-vector, graded lattice, M\"obius function,
order complex, homology}

  \begin{abstract}
    It is proved that the Boolean algebra of rank $n$ minimizes the flag
    $f$-vector among all graded lattices of rank $n$, whose proper part
    has nontrivial top-dimensional homology. The analogous statement for
    the flag $h$-vector is conjectured in the Cohen-Macaulay case.
  \end{abstract}

  \maketitle

  \section{Introduction}
  \label{sec:intro}

  Let $P$ be a finite graded poset of rank $n \ge 1$, having a
  minimum element $\hat{0}$, maximum element $\hat{1}$ and rank function
  $\rho: P \to \NN$ (we refer to \cite[Chapter 3]{StaEC1} for any undefined
  terminology on partially ordered sets). Given $S \subseteq [n-1] := \{1,
  2,\dots,n-1\}$, the number of chains $\cC \subseteq P \sm \{ \hat{0},
  \hat{1} \}$ such that $\{\rho(x): x \in \cC\} = S$ will be denoted by $f_P
  (S)$. For instance, $f_P (S)$ is equal to the number of elements of $P$ of
  rank $k$, if $S = \{k\} \subseteq [n-1]$, and to the number of maximal
  chains of $P$, if $S = [n-1]$. The function which maps $S$ to $f_P (S)$
  for every $S \subseteq [n-1]$ is an important enumerative invariant of $P$,
  known as the \textit{flag $f$-vector}; see, for instance, \cite{BH}.

  The present note is partly motivated by the results of \cite{BE, EK}.
  There it is proven that the Boolean algebra of rank $n$ minimizes the
  {\bf~cd}-index, an invariant which refines the flag
  $f$-vector, among all face lattices of convex polytopes and, more
  generally, Gorenstein* lattices, of rank $n$. It is natural to consider
  lattices which are not necessarily Eulerian, in this context. To state
  our main result, we fix some more notation as follows. We denote by $\Delta
  (Q)$ the simplicial complex consisting of all chains in a finite poset $Q$,
  known as the \emph{order complex} \cite{Bj} of $Q$, and by $\widetilde{H}_*
  (\Delta; \kk)$ the reduced simplicial homology over $\kk$ of an abstract
  simplicial complex $\Delta$, where $\kk$ is a fixed field or $\ZZ$. We
  denote by $B_n$ the Boolean algebra of rank $n$ (meaning, the lattice of
  subsets of the set $[n]$, partially ordered by inclusion) and recall that
  if $S = \{s_1 < s_2 < \cdots < s_l \} \subseteq [n-1]$, then $f_{B_n} (S)$
  is equal to the multinomial coefficient $\alpha_n (S) = {n \choose s_1,
  s_2-s_1,\dots,n-s_l}$.

  \begin{theorem} \label{thm:flag-f}
     Let $L$ be a finite graded lattice of rank $n$, with minimum element
     $\hat{0}$ and maximum element $\hat{1}$, and let $\bar{L} = L \sm \{
     \hat{0}, \hat{1} \}$ be the proper part of $L$. If $\widetilde{H}_{n-2}
     (\Delta(\bar{L}); \kk) \ne 0$, then
       \begin{equation} \label{eq:minf}
         f_L (S) \ge \alpha_n (S)
      \end{equation}
     for every $S \subseteq [n-1]$. In other words, the Boolean algebra of
     rank $n$ minimizes the flag $f$-vector among all finite graded lattices
     of rank $n$ whose proper part has nontrivial top-dimensional reduced
     homology over $\kk$.
  \end{theorem}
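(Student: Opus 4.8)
The plan is to prove the statement by induction on $n$, the case $n=1$ being immediate since then $\bar L=\emptyset$ and $f_L(\emptyset)=1=\alpha_1(\emptyset)$. One may first reduce to the case in which $\kk$ is a field: the top reduced homology $\widetilde H_{n-2}(\Delta(\bar L);\ZZ)$ is a subgroup of the free abelian group $C_{n-2}(\Delta(\bar L))$, hence free, so if it is nonzero then $\widetilde H_{n-2}(\Delta(\bar L);\mathbb Q)\ne 0$; thus it is enough to treat $\kk=\mathbb Q$. Call a graded lattice, or a closed interval of $L$, \emph{good} if the reduced homology of the order complex of its proper part is nonzero over $\kk$ in the top degree; the hypothesis is that $L$ is good, and the goal is to find enough good intervals inside $L$ on which to invoke the inductive hypothesis.

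Fix a nonzero cycle $\xi\in\widetilde H_{n-2}(\Delta(\bar L);\kk)=Z_{n-2}(\Delta(\bar L);\kk)$ and let $\Sigma$ be the set of maximal chains of $\bar L$ occurring in $\xi$ with nonzero coefficient; since $\partial\xi=0$, every codimension-one face of a chain in $\Sigma$ lies in at least two members of $\Sigma$. For $z\in\bar L$ of rank $k$, collect the terms of $\xi$ according to the (unique) rank-$k$ element of each maximal chain. Writing a maximal chain through $z$ as $\mathbf c_-\cup\{z\}\cup\mathbf c_+$, with $\mathbf c_-$ a maximal chain of $(\hat{0},z)$ and $\mathbf c_+$ a maximal chain of $(z,\hat{1})$, a direct computation of boundaries shows that the $z$-part of $\xi$ is carried by a cycle of top bidegree in $C_\bullet(\Delta((\hat{0},z)))\otimes C_\bullet(\Delta((z,\hat{1})))$; over a field such a cycle lies in $\widetilde H_{k-2}(\Delta((\hat{0},z));\kk)\otimes\widetilde H_{n-k-2}(\Delta((z,\hat{1}));\kk)$. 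Hence every element $z$ occurring in $\Sigma$ has the property that \emph{both} $[\hat{0},z]$ and $[z,\hat{1}]$ are good.

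Taking $z$ to be an atom, $L$ has good atoms, and in fact at least $n$: arguing by induction on the rank $j$ and using the pseudomanifold property of $\Sigma$ together with the fact that in a length-two interval any two distinct middle elements join to the top, one shows that every rank-$j$ element occurring in $\Sigma$ is a join of good atoms. Then the join $t$ of all good atoms is $\geq$ every rank-$(n-1)$ element of a chain of $\Sigma$, so $\rho(t)\geq n-1$; and $\rho(t)=n-1$ is impossible, as it would make $t$ the only admissible rank-$(n-1)$ vertex of each chain of $\Sigma$, contradicting that codimension-one faces lie in two chains. Thus $t=\hat{1}$, so $\hat{1}$ is a join of good atoms and $\rho(\hat{1})=n$ forces at least $n$ good atoms; dually, $L$ has at least $n$ good coatoms. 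Now write $S=\{s_1<\cdots<s_l\}$ with $s_1=\min S$. Splitting a chain of rank set $S$ at its minimum gives
\[
  f_L(S)=\sum_{z\,:\,\rho(z)=s_1} f_{[z,\hat{1}]}\bigl((S\sm\{s_1\})-s_1\bigr),
\]
and, together with the identity $\alpha_n(S)=\binom{n}{s_1}\,\alpha_{n-s_1}\bigl((S\sm\{s_1\})-s_1\bigr)$, restricting the sum to good $z$ and applying the inductive hypothesis to the rank-$(n-s_1)$ lattices $[z,\hat{1}]$ reduces Theorem \ref{thm:flag-f} for $L$ to the assertion that, for each $k\in[n-1]$, $L$ has at least $\binom{n}{k}$ elements $z$ of rank $k$ with $[z,\hat{1}]$ good. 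The case $k=1$ is the bound on good atoms just proved, and the dual bound on good coatoms, with the inductive hypothesis, disposes of all $S$ with $1\in S$ or $n-1\in S$.

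The remaining, and principal, difficulty is this counting statement for intermediate ranks $2\le k\le n-2$: that a good lattice of rank $n$ has at least $\binom{n}{k}$ rank-$k$ elements with good upper interval. The naive double count — sum over good atoms $a$ the at least $\binom{n-1}{k-1}$ such elements above $a$ and then divide — fails, because a rank-$k$ element may dominate arbitrarily many atoms; the bound must instead exploit more of the structure of $\Sigma$. Concretely, the sub-join-semilattice $M$ generated by the good atoms contains $\hat{1}$ and, as far as $\Sigma$ is concerned, behaves like a graded lattice of rank $n$ with at least $n$ atoms, and the lattice axiom excludes the balanced configurations that would otherwise be ``too small'' in rank $k$ (for $n=3$ this is the impossibility of a $4$-cycle as $\Delta(\bar L)$; in general it rules out the iterated join of $n-1$ copies of $S^0$, which has only two vertices of each rank). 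Making this precise — presumably by a simultaneous induction that also tracks the number of good elements of each rank and uses the Künneth-type decomposition above at level $k$ rather than at level $1$ — is the crux; the final assembly via the displayed identity is then routine.
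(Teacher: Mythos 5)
Your reduction is sound as far as it goes: splitting each chain at its minimum-rank element reduces the theorem, by induction on $n$, to the assertion that $L$ has at least ${n \choose k}$ elements $z$ of rank $k$ whose upper interval $[z,\hat{1}]$ has non-acyclic proper part, and your K\"unneth-type observation (that every element occurring in the support of a top-dimensional cycle has \emph{both} its lower and upper open intervals non-acyclic in top degree) is a clean alternative to the Mayer--Vietoris and Crosscut arguments the paper uses to produce good atoms and coatoms. But the proof is not complete, and you say so yourself: the counting statement for intermediate ranks $2 \le k \le n-2$ is left as a ``crux'' with only a guess (``presumably by a simultaneous induction\dots'') at how it might be established. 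That statement is not a technical loose end; it \emph{is} the theorem. Nothing in your sketch — not the pseudomanifold property of $\Sigma$, not the remark that the join-semilattice generated by the good atoms contains $\hat{1}$, not the exclusion of joins of copies of $S^0$ — produces ${n \choose k}$ good rank-$k$ elements. (Two smaller points: the inference ``$\hat{1}$ is a join of good atoms and $\rho(\hat{1})=n$, hence there are at least $n$ good atoms'' needs justification, since rank is not subadditive under joins in a general graded lattice — one must walk up a maximal chain and extract a new good atom at each covering step; and the reduction to field coefficients is needed for your K\"unneth step, though your $\ZZ \to \mathbb{Q}$ remark handles that.)

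The missing mechanism in the paper is a \emph{strengthened} induction hypothesis (Step 4 of Proposition \ref{prop:f}): for all $0 \le r \le k \le n$ and every order ideal $I$ of $L \sm \{\hat{1}\}$ generated by at most $r$ elements, there are at least ${n-r \choose k-r}$ good rank-$k$ elements outside $I$. This is proved by a double induction on $n$ and $n-r$: since $I$ contains at most $r \le n-1$ coatoms while $L$ has at least $n$ good coatoms, one picks a good coatom $b \notin I$; the ideal $I \cap [\hat{0},b]$ is generated by at most $r$ elements, namely the meets of $b$ with the generators of $I$ (this is exactly where the lattice axiom enters), so induction on $n$ inside the rank-$(n-1)$ lattice $[\hat{0},b]$ yields ${n-r-1 \choose k-r}$ good rank-$k$ elements in $[\hat{0},b] \sm I$, while induction on $n-r$ applied to the ideal $I \cup [\hat{0},b]$, generated by at most $r+1$ elements, yields ${n-r-1 \choose k-r-1}$ further good rank-$k$ elements; Pascal's rule gives ${n-r \choose k-r}$. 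Your instinct that the naive double count over atoms fails is correct, but without this (or an equivalent) ideal-avoidance strengthening, the argument does not close, so as it stands the proposal is an insightful reduction plus an acknowledged gap rather than a proof.
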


  A similar statement, asserting that the Boolean algebra of rank $n$ has
  the smallest number of elements among all finite lattices $L$ satisfying
  $\widetilde{H}_{n-2} (\Delta(\bar{L}); \ZZ) \ne 0$, was proved by
  Meshulam \cite{Me}. The proof of Theorem \ref{thm:flag-f}, given in
  Section \ref{sec:fproof}, is elementary and similar in spirit to (but
  somewhat more involved than) the proof of the result of \cite{Me}. A
  different (but less elementary) proof may be given using the methods of
  \cite[Section 2]{EK}. In the remainder of this section we discuss some
  consequences of Theorem \ref{thm:flag-f} and a related open problem.

  The \emph{$f$-vector} of a simplicial complex $\Delta$ is defined as the
  sequence $f(\Delta) = (f_0, f_1,\dots)$, where $f_i$ is the number of
  $i$-dimensional faces of $\Delta$. We recall that the order complex
  $\Delta(\bar{B}_n)$ is isomorphic to the barycentric subdivision of the
  $(n-1)$-dimensional simplex. The next statement follows from this
  observation, Theorem \ref{thm:flag-f} and the fact (see, for instance,
  \cite[p.~95]{StaCCA}) that each entry of the $f$-vector of the order
  complex $\Delta(\bar{L})$ can be expressed as a sum of entries of the flag
  $f$-vector of $L$.

  \begin{corollary} \label{cor:f}
     The barycentric subdivision of the $(n-1)$-dimensional simplex has
     the smallest possible $f$-vector among all order complexes
     of the form $\Delta(\bar{L})$, where $L$ is a finite graded lattice of
     rank $n$ satisfying $\widetilde{H}_{n-2} (\Delta(\bar{L}); \kk) \ne 0$.
  \end{corollary}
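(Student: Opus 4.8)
The plan is to reduce Theorem~\ref{thm:flag-f} (and hence Corollary~\ref{cor:f}) to the following \textit{Main Lemma}: if $L$ is a finite graded lattice of rank $n$ with $\widetilde{H}_{n-2}(\Delta(\bar{L});\kk)\ne 0$, then there exist atoms $a_1,\dots,a_n$ of $L$ with $\rho\bigl(\bigvee_{i \in T} a_i\bigr) = |T|$ for every $T \subseteq [n]$. Granting this, the map $\phi \colon B_n \to L$ given by $\phi(T) = \bigvee_{i \in T} a_i$ is order-preserving and rank-preserving, and it is injective: if $\phi(T) = \phi(T')$ then $\phi(T \cup T') = \phi(T) \vee \phi(T') = \phi(T)$, so $|T \cup T'| = |T| = |T'|$ and $T = T'$. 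Hence $\phi$ maps the set of chains of $B_n$ with rank set $S$ injectively into the set of chains of $L$ with rank set $S$, giving $f_L(S) \ge f_{B_n}(S) = \alpha_n(S)$, which is~\eqref{eq:minf}. Corollary~\ref{cor:f} is then immediate, since the $i$-th entry of $f(\Delta(\bar{L}))$ equals $\sum_{|S| = i+1} f_L(S)$ (the sum over $S \subseteq [n-1]$), which by~\eqref{eq:minf} is at least $\sum_{|S| = i+1} \alpha_n(S) = f_i(\Delta(\bar{B}_n))$.

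To prove the Main Lemma I would induct on $n$; the cases $n \le 2$ are immediate (for $n = 1$ take $a_1 = \hat{1}$, and for $n = 2$ the hypothesis forces at least two atoms, any two of which join to $\hat{1}$). The key initial remark is that every chain in $\bar{L}$ contains at most one element of each rank $1, \dots, n-1$, so $\dim \Delta(\bar{L}) \le n-2$; therefore $\widetilde{H}_{n-2}(\Delta(\bar{L});\kk)$ coincides with the space of $(n-2)$-cycles, and the hypothesis supplies a nonzero cycle $z = \sum_C c_C\, C$ supported on maximal chains $C$ of $L$ (with the endpoints $\hat{0}, \hat{1}$ deleted). Its cycle condition reads: for each chain $D$ of $\bar{L}$ of length $n-2$, missing rank $j$ say, one has $\sum_y c_{D \cup \{y\}} = 0$, the sum over all rank-$j$ elements $y$ lying strictly between the two neighbours of the gap of $D$ (the signs all agreeing, since the inserted element always occupies the $j$-th position).

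For the inductive step, I would fix a maximal chain $C_0$ in the support of $z$ and set $m = \max C_0$, a coatom of $L$. Slicing $z$ along $m$ — that is, forming $z_m = \sum_{\max C = m} c_C\, (C \sm \{m\})$ — yields, after a routine verification that the boundary relations of $z_m$ are special cases of those of $z$, a nonzero $(n-3)$-cycle on $\Delta(\overline{[\hat{0}, m]})$; thus $\widetilde{H}_{n-3}(\Delta(\overline{[\hat{0}, m]});\kk) \ne 0$. Applying the inductive hypothesis to the rank-$(n-1)$ lattice $[\hat{0}, m]$ produces atoms $a_1, \dots, a_{n-1}$ of $L$ (atoms, joins and ranks being inherited from $L$) with $\rho\bigl(\bigvee_{i \in T} a_i\bigr) = |T|$ for all $T \subseteq [n-1]$; in particular $\bigvee_{i=1}^{n-1} a_i$ has rank $n-1$ and lies below $m$, hence equals $m$. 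Moreover the crosscut theorem, applied to the set of atoms of $L$, forces $\bigvee(\text{atoms of } L) = \hat{1}$: otherwise that set of atoms is itself a face of the crosscut complex, which is then a contractible simplex, contradicting $\widetilde{H}_{n-2}(\Delta(\bar{L});\kk) \ne 0$. Since $m < \hat{1}$, there is an atom $a_n$ with $a_n \not\le m$. What remains is to arrange that $\rho\bigl(a_n \vee \bigvee_{i \in T} a_i\bigr) = |T| + 1$ for every $T \subseteq [n-1]$, i.e. that $a_n \vee \bigvee_{i \in T} a_i$ covers $\bigvee_{i \in T} a_i$; with the preceding facts, this completes a rank-$n$ Boolean subposet.

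The main obstacle is precisely this last step. In a general graded lattice a join of $k+1$ atoms can have rank much larger than $k+1$, so there is no \emph{a priori} reason for $a_n \vee \bigvee_{i \in T} a_i$ to cover $\bigvee_{i \in T} a_i$, and one must use the homological hypothesis a second time to exclude this. I expect that the resolution requires choosing the coatom $m$, the atoms $a_1, \dots, a_{n-1}$, and the new atom $a_n$ in a coordinated way rather than successively — for instance, extracting from the cycle $z$ a complete flag all of whose ``local flips'' at consecutive ranks are realized within $z$, and then showing that the atoms witnessing those flips are jointly independent — so that the extremal lattice behaves semimodularly along the chosen flag. This coordination is exactly what makes the argument ``somewhat more involved'' than Meshulam's, which only needs two competing elements at each rank rather than an honest Boolean sublattice.
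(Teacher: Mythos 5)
Your derivation of the corollary from Theorem~\ref{thm:flag-f} is correct and is exactly what the paper does: the $i$-th entry of $f(\Delta(\bar{L}))$ is $\sum_{|S|=i+1} f_L(S)$, so \eqref{eq:minf} summed over $S$ of fixed cardinality gives the claim. The problem lies entirely in your proposed proof of Theorem~\ref{thm:flag-f}, and it is not merely the ``main obstacle'' you flag at the end: your Main Lemma is false. Take $L$ to be the face lattice of a square, a graded lattice of rank $n=3$ whose proper part has order complex the barycentric subdivision of a circle, so $\widetilde{H}_{1}(\Delta(\bar{L});\kk)\ne 0$. The atoms are the four vertices $1,2,3,4$ and the rank-$2$ elements are the edges $12,23,34,41$. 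Any three vertices contain a diagonal pair (such as $1,3$), whose join is $\hat{1}$ of rank $3$; hence there is no choice of atoms $a_1,a_2,a_3$ with $\rho(a_i\vee a_j)=2$ for all $i\ne j$. So no amount of coordination in choosing the coatom $m$ and the atoms can produce the rank-preserving Boolean subposet you want, and the injection $\phi$ on which your count of chains rests does not exist in general. (Your slicing argument producing a coatom $m$ with $\widetilde{H}_{n-3}(\Delta(\hat{0},m);\kk)\ne 0$ is sound, and is a reasonable substitute for the paper's Mayer--Vietoris argument in Step~1 of Proposition~\ref{prop:f}; it is the global Boolean structure that cannot be salvaged.)

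The paper avoids this by proving something weaker than a Boolean sublattice but still sufficient: Proposition~\ref{prop:f} shows that for each $k$ there are at least ${n\choose k}$ \emph{good} elements of rank $k$, meaning elements $x$ with $\widetilde{H}_{k-2}(\Delta(\hat{0},x);\kk)\ne 0$, with no claim about how these elements meet or join one another. This is established by an induction on order ideals generated by at most $r$ elements (Step~4), built on the existence of at least $n$ good coatoms (Steps~1--3). Theorem~\ref{thm:flag-f} then follows by induction on $n$: writing $k=\max S$, each of the ${n\choose k}$ good rank-$k$ elements $x$ contributes, by the induction hypothesis applied to $[\hat{0},x]$, at least $\alpha_k(S\sm\{k\})$ chains with rank set $S\sm\{k\}$, giving $f_L(S)\ge{n\choose k}\alpha_k(S\sm\{k\})=\alpha_n(S)$. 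In the square example this mechanism is visible: there are $4\ge 3$ good elements in each of ranks $1$ and $2$, even though no Boolean $B_3$ of atom-joins sits inside the lattice.
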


  Analogous results for the class of flag simplicial complexes have
  appeared in \cite{Ath, GKN, Me0, Ne}.

  Let $P$ be a graded poset of rank $n$, as in the beginning of this section.
  The \emph{flag $h$-vector} of $P$ is the function assigning to each $S
  \subseteq [n-1]$ the integer
    \begin{equation} \label{eq:flag-h}
      h_P (S) \ = \ \sum_{T \subseteq S} \ (-1)^{|S \sm T|} \ f_P (T).
    \end{equation}
  Equivalently, we have
    \begin{equation} \label{eq:flag-fh}
      f_P (S) \ = \ \sum_{T \subseteq S} \ h_P (T)
    \end{equation}
  for every $S \subseteq [n-1]$. We write $\beta_n (S)$ for the entry
  $h_{B_n} (S)$ of the flag $h$-vector of the Boolean algebra of rank $n$
  and recall \cite[Corollary 3.12.2]{StaEC1} that $\beta_n (S)$ is equal to
  the number of permutations of $[n]$ with descent set $S$.

  It is known that if $P$ is Cohen-Macaulay over $\kk$ (see \cite[Section
  11]{Bj} or \cite[Section 3.8]{StaEC1} for the definition), then $h_P (S)
  \ge 0$ for every $S \subseteq [n-1]$. Moreover, in this case
  $\Delta(\bar{L})$ has nontrivial top-dimensional reduced homology over
  $\kk$ if and only if $\mu_P (\hat{0}, \hat{1}) \ne 0$, where $\mu_P$ is
  the M\"obius function of $P$. Hence, Theorem \ref{thm:flag-f} implies that
  the Boolean algebra of rank $n$ minimizes the flag $f$-vector among all
  Cohen-Macaulay lattices of rank $n$ with nonzero M\"obius number. In view
  of (\ref{eq:flag-fh}), the following conjecture provides a natural
  strengthening of this statement.

  \begin{conjecture} \label{conj:flag-h}
     Let $L$ be a finite lattice of rank $n$, with minimum element $\hat{0}$
     and maximum element $\hat{1}$. If $L$ is Cohen-Macaulay over $\kk$ and
     $\mu_L (\hat{0}, \hat{1}) \ne 0$, then
       \begin{equation} \label{eq:minh}
         h_L (S) \ge \beta_n (S)
      \end{equation}
     for every $S \subseteq [n-1]$. In other words, the Boolean algebra of
     rank $n$ minimizes the flag $h$-vector among all Cohen-Macaulay
     lattices of rank $n$ with nonzero M\"obius number.
  \end{conjecture}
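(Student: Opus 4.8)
The plan is to reduce the flag $h$-vector inequality to a statement about the top homology of rank-selected subcomplexes, and then to control the inclusion--exclusion signs either combinatorially, through descent sets, or homologically, through the methods of \cite{EK}.

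First I would invoke Stanley's rank-selection theorem: since $L$ is Cohen-Macaulay over $\kk$, for every $S \subseteq [n-1]$ the rank-selected subposet $L_S := \{ x \in L : \rho(x) \in S \} \cup \{ \hat{0}, \hat{1} \}$ is again Cohen-Macaulay, its order complex $\Delta(\bar{L}_S)$ has reduced homology concentrated in the top degree $|S|-1$, and
\[
   h_L(S) \ = \ \dim_\kk \widetilde{H}_{|S|-1}(\Delta(\bar{L}_S); \kk).
\]
The same identity holds for $B_n$, so the conjecture is equivalent to the family of inequalities $\dim_\kk \widetilde{H}_{|S|-1}(\Delta(\bar{L}_S); \kk) \ge \beta_n(S)$. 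The extreme coordinate $S = [n-1]$ is exactly the hypothesis $\mu_L(\hat{0}, \hat{1}) \ne 0$: it forces $h_L([n-1]) \ge 1 = \beta_n([n-1])$. The task is therefore to interpolate between this extreme coordinate and the remaining ones.

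The difficulty is that the flag $f$-vector domination $f_L(T) \ge \alpha_n(T)$ supplied by Theorem \ref{thm:flag-f} cannot be combined through \eqref{eq:flag-h} to yield $h_L(S) \ge \beta_n(S)$, because of the alternating signs. To control them I would exploit the descent interpretation of $\beta_n(S)$ as the number of permutations of $[n]$ with descent set $S$. When $L$ admits an $R$-labeling (in particular when $L$ is EL-shellable), $h_L(S)$ equals the number of maximal chains of $L$ whose descent set, under the labeling, is exactly $S$. In this case it suffices to build a descent-set-preserving injection from $\mathfrak{S}_n$ into the set of maximal chains of $L$. The natural source of such an injection is the proof of Theorem \ref{thm:flag-f}: the nonvanishing top homology produces $n$ atoms whose successive joins realize a Boolean pattern of subintervals, and I would try to arrange the labeling so that the $n!$ orderings of these atoms map to maximal chains carrying the prescribed descent sets. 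For Cohen-Macaulay lattices that do not admit an $R$-labeling I would instead follow the less elementary route indicated after Theorem \ref{thm:flag-f}: encoding the flag $h$-vector in the $ab$-index $\Psi_L(a,b) = \sum_{S \subseteq [n-1]} h_L(S)\, u_S$, where $u_S = u_1 u_2 \cdots u_{n-1}$ is the noncommutative monomial with $u_i = b$ for $i \in S$ and $u_i = a$ otherwise, the conjecture becomes the coefficientwise domination $\Psi_L \ge \Psi_{B_n}$, and one would adapt the machinery of \cite[Section 2]{EK} from the Gorenstein* to the Cohen-Macaulay setting, aiming to express each $h_L(S)$ as $\beta_n(S)$ plus the dimension of an explicit nonnegatively graded piece.

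The main obstacle, and the reason the statement is only conjectural, is sign control in the absence of good combinatorial structure. Rank selection destroys the lattice property, since $L_S$ need not be a lattice, so one cannot simply induct on $n$ while preserving the hypotheses of the conjecture; and a general Cohen-Macaulay lattice need not admit an $R$-labeling, so the clean count of maximal chains with a given descent set is unavailable. A successful proof would have to show either that the lattice axioms propagate enough structure, for instance a crosscut or complementation argument yielding a Boolean skeleton compatible with a chosen labeling, to force the descent-set distribution of $L$ to dominate that of $\mathfrak{S}_n$ directly, or that the positivity results of \cite{EK} survive the weakening from the Eulerian to the merely Cohen-Macaulay hypothesis.
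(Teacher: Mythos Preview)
The statement you are attempting to prove is labeled in the paper as a \emph{conjecture}, not a theorem, and the paper offers no proof of it. The surrounding text explicitly presents it as open, noting only that it is known in two special cases: Gorenstein* lattices, where it follows from the {\bf cd}-index result of \cite{EK}, and geometric lattices, by \cite[Proposition~7.4]{BER}. There is therefore no proof in the paper against which to compare your proposal.

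As for the substance of your outline: you correctly reduce the question, via Stanley's rank-selection theorem, to a comparison of top homology dimensions, and you correctly identify why the flag $f$-vector inequality of Theorem~\ref{thm:flag-f} cannot be pushed through \eqref{eq:flag-h} because of the alternating signs. But your two proposed routes are both incomplete, and you say so yourself. The $R$-labeling approach requires an EL-shellable (or at least admissibly labeled) lattice, and you give no argument that a descent-preserving injection from $\mathfrak{S}_n$ into maximal chains can actually be constructed even in that restricted setting; the ``Boolean skeleton'' produced in the proof of Theorem~\ref{thm:flag-f} consists of good elements, not of atoms whose joins form a Boolean sublattice, so the injection you sketch is not obviously available. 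The second route, adapting \cite{EK} from Gorenstein* to Cohen--Macaulay, is precisely the gap the paper leaves open: the decomposition theorem of \cite{EK} relies on the Eulerian condition in an essential way, and you offer no mechanism to replace it.

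In short, your proposal is a reasonable survey of possible attack lines and their obstructions, but it is not a proof, and it does not go beyond what the paper already indicates about the status of the conjecture.
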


  This conjecture was initially stated by the author under the assumption
  that $\mu_L (x, y) \ne 0$ holds for all $x, y \in L$ with $x \le_L y$ and
  took its present form after a question raised by R.~Stanley \cite{StaPC},
  asking whether this condition could be relaxed to $\mu_L (\hat{0}, \hat{1})
  \ne 0$. It would imply that among all Cohen-Macaulay order complexes of
  the form $\Delta(\bar{L})$, where $L$ is a lattice of rank $n$
  satisfying $\mu_L (\hat{0}, \hat{1}) \ne 0$, the barycentric subdivision
  of the $(n-1)$-dimensional simplex has the smallest possible $h$-vector
  (the entries of the $h$-vector of this subdivision are the Eulerian
  numbers, counting permutations of the set $[n]$ by the number of descents).
  Conjecture \ref{conj:flag-h} is known to hold for Gorenstein* lattices
  (in this case it follows from the stronger result \cite[Corollary 1.3]{EK},
  mentioned earlier, on the {\bf~cd}-index of such a lattice) and for
  geometric lattices \cite[Proposition 7.4]{BER}.

  \section{Proof of Theorem \ref{thm:flag-f}}
  \label{sec:fproof}

  Throughout this section, $L$ is a lattice as in Theorem \ref{thm:flag-f}.
  For $a, b \in L$ with $a \le_L b$, we denote by $\Delta(a, b)$ (respectively,
  by $\Delta(a, b]$) the order complex of the open interval $(a, b)$
  (respectively, half-open interval $(a, b]$) in $L$. We say that an element
  $x \in L$ is \emph{good} if $x = \hat{0}$ or $\widetilde{H}_{k-2} (\Delta
  (\hat{0}, x); \kk) \ne 0$, where $k$ is the rank of $x$ in $L$, and otherwise
  that $x$ is \emph{bad}.

  \medskip
  The proof of Theorem \ref{thm:flag-f} will follow from the next proposition.

  \begin{proposition} \label{prop:f}
     Under the assumptions of Theorem \ref{thm:flag-f}, the lattice $L$ has at least
     ${n \choose k}$ good elements of rank $k$ for every $k \in \{0, 1,\dots,n\}$.
  \end{proposition}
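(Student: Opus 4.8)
The plan is to prove Proposition~\ref{prop:f} by induction on the rank $n$. The case $n = 1$ is immediate, since then $\hat{1}$ is the only element of positive rank, and it is good because $\widetilde{H}_{-1}(\Delta(\emptyset);\kk) = \kk \neq 0$. Throughout the induction I will use that an interval $[a,b]$ of $L$ is itself a graded lattice of rank $\rho(b)-\rho(a)$, that $\Delta(a,b]$ is a cone with apex $b$ and hence contractible, and that every atom of $L$ is automatically good, the open interval below an atom being empty.

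The first step is to show that $L$ has a good coatom. For a coatom $c$ the closed star of $c$ in $\Delta := \Delta(\bar{L})$ equals $\Delta(\hat{0},c]$, a cone, and its link is $\Delta(\hat{0},c)$; hence the Mayer--Vietoris sequence associated with the decomposition $\Delta = \mathrm{star}(c) \cup \Delta(\bar{L}\sm\{c\})$ contains the exact portion $\widetilde{H}_{n-2}(\Delta(\bar{L}\sm\{c\})) \to \widetilde{H}_{n-2}(\Delta) \to \widetilde{H}_{n-3}(\Delta(\hat{0},c))$. If $c$ is bad the last group vanishes, so the first map is surjective. Distinct coatoms are incomparable, so the same computation persists as coatoms are deleted one at a time; were every coatom of $L$ bad, this would force $\widetilde{H}_{n-2}$ of the order complex of $\{x \in \bar{L} : \rho(x) \le n-2\}$ to be nonzero, which is absurd since that complex has dimension at most $n-3$. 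Thus some coatom $c$ is good, and then $[\hat{0},c]$ is a graded lattice of rank $n-1$ with $\widetilde{H}_{n-3}(\Delta(\hat{0},c);\kk)\ne 0$. Since goodness is an intrinsic property of intervals, applying the inductive hypothesis to $[\hat{0},c]$ produces at least $\binom{n-1}{k}$ good elements of $L$, of rank $k$ and lying below $c$, for each $0 \le k \le n-1$; together with $\hat{1}$ (good by the hypothesis of Theorem~\ref{thm:flag-f}) this takes care of ranks $0$ and $n$ and accounts for $\binom{n-1}{k}$ good elements of each intermediate rank.

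It remains, for each $1 \le k \le n-1$, to exhibit a further $\binom{n}{k}-\binom{n-1}{k} = \binom{n-1}{k-1}$ good elements of rank $k$ lying outside $[\hat{0},c]$; being outside $[\hat{0},c]$ they are automatically distinct from those already found, and Pascal's identity then closes the induction. Here the lattice hypothesis is used: by the crosscut theorem, $\widetilde{H}_{n-2}(\Delta(\bar{L});\kk)\ne 0$ forces the join of all atoms of $L$ to be $\hat{1}$, so there is an atom $a$ with $a\not\le c$ (in fact $a$ and $c$ are complements), and $a$, being an atom, is good; this handles rank $1$. For ranks $2 \le k \le n-1$ the plan is to transport good elements of the rank-$(n-1)$ upper interval $[a,\hat{1}]$ down into $L$: one wants an atom $a$ with $a\not\le c$ for which $[a,\hat{1}]$ still carries top-dimensional homology, so that the inductive hypothesis applies to $[a,\hat{1}]$ and yields, for each such $k$, at least $\binom{n-1}{k-1}$ elements $y>a$ good within $[a,\hat{1}]$; each such $y$ lies outside $[\hat{0},c]$ since $y\ge a\not\le c$. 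Securing an atom $a$ with these two properties is already part of the difficulty.

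The step I expect to be the main obstacle --- and the point where the argument goes beyond Meshulam's --- is the transfer of goodness: showing that enough of the elements $y$ just described are good in $L$ itself, i.e.\ that $\widetilde{H}_{\rho(y)-3}(\Delta(a,y);\kk)\ne 0$ implies $\widetilde{H}_{\rho(y)-2}(\Delta(\hat{0},y);\kk)\ne 0$. Comparing $\Delta(\hat{0},y)$ with $\Delta(a,y)$ through the contractible cone $\Delta[a,y) = \{a\}\ast\Delta(a,y)$ by Mayer--Vietoris reduces this to controlling $\widetilde{H}_{\rho(y)-3}(\Delta((\hat{0},y)\sm\{a\});\kk)$, which is not automatic; making it work should require a careful choice of the atom $a$ and, most likely, passing only through the specific good elements of $[a,\hat{1}]$ delivered by the recursion rather than through an arbitrary such element. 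Apart from this step, the proof is a routine matter of Mayer--Vietoris bookkeeping and the binomial recursion.
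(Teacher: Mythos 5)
Your proposal correctly reproduces the opening of the paper's argument: the existence of a good coatom via Mayer--Vietoris (your iterated-deletion packaging is essentially the paper's minimal-counterexample argument in Step~1, and is fine), followed by induction on the lower interval $[\hat{0},c]$ of a good coatom $c$, which legitimately yields ${n-1 \choose k}$ good elements of rank $k$ because goodness is intrinsic to lower intervals. But the remaining ${n-1 \choose k-1}$ good elements of rank $k$ outside $[\hat{0},c]$ are where all the content lies, and there your plan has a genuine gap that you yourself flag: you propose to apply the induction hypothesis to an upper interval $[a,\hat{1}]$ and then ``transfer goodness,'' i.e.\ deduce $\widetilde{H}_{\rho(y)-2}(\Delta(\hat{0},y);\kk)\ne 0$ from $\widetilde{H}_{\rho(y)-3}(\Delta(a,y);\kk)\ne 0$. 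This implication is not merely hard; it is false as an elementwise principle. Already for $\rho(y)=2$ the interval $(a,y)$ is empty, so every element covering $a$ is ``good in $[a,\hat{1}]$,'' whereas $y$ is good in $L$ only if it lies above at least two atoms, which a graded lattice need not arrange. No choice of the atom $a$ repairs this, so the route has to be replaced rather than completed.

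The paper avoids upward transfer entirely by strengthening the inductive statement (its Step~4): for all $0\le r\le k\le n$ and every order ideal $I$ of $L\sm\{\hat{1}\}$ generated by at most $r$ elements, there are at least ${n-r \choose k-r}$ good elements of rank $k$ outside $I$. This is proved by induction on $n$ and on $n-r$: one first shows that $L$ has at least $n$ good coatoms (this is where upper intervals $[a,\hat{1}]$ enter, but only to count \emph{coatoms} after reducing to the case where all coatoms of $L$ are good), picks a good coatom $b\notin I$, harvests ${n-r-1 \choose k-r}$ good elements of rank $k$ from the lower interval $[\hat{0},b]$ avoiding $I\cap[\hat{0},b]$ (induction on $n$; goodness is intrinsic here, so nothing needs transferring), and obtains the remaining ${n-r-1 \choose k-r-1}$ by applying the statement to the larger ideal $I\cup[\hat{0},b]$, generated by at most $r+1$ elements (induction on $n-r$); Pascal's identity then finishes. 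The order-ideal strengthening, which lets one keep descending into lower intervals of good coatoms while recording what has already been counted, is the missing idea in your write-up.
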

  \begin{proof}
  We proceed in several steps.

  \medskip
  \noindent {\sf Step 1:} We show that $L$ has at least one good coatom. Suppose, by
  the way of contradiction, that no such coatom exists. Suppose further that $L$ has
  the minimum possible number of coatoms among all lattices of rank $n$ which satisfy
  the assumptions of Theorem \ref{thm:flag-f} and have no good coatom. Since
  $\Delta(\bar{L})$ is non-acyclic over $\kk$, the order complex $\Delta(\bar{L})$
  cannot be a cone and hence $L$ must have at least two coatoms. Let $c$ be one of
  them and consider the complexes $\Delta(\bar{L} \sm \{c\})$ and $\Delta(\hat{0}, c]$.
  The union of these complexes is equal to $\Delta(\bar{L})$ and their intersection is
  equal to $\Delta(\hat{0}, c)$. Since $\Delta(\hat{0}, c]$ is a cone, hence
  contractible, and since $\widetilde{H}_{n-3} (\Delta(\hat{0}, c); \kk) = 0$ by
  assumption, it follows from the Mayer-Vietoris long exact sequence in homology for
  $\Delta(\bar{L} \sm \{c\})$ and $\Delta (\hat{0}, c]$ that
     \begin{equation} \label{eq:L-c}
       \widetilde{H}_{n-2} (\Delta(\bar{L} \sm \{c\}); \kk) \cong
         \widetilde{H}_{n-2} (\Delta(\bar{L}); \kk) \ne 0.
     \end{equation}
  Since $L \sm \{c\}$ may not be graded, we consider the subposet $M = J \cup \{
  \hat{1} \}$ of $L$, where $J$ stands for the order ideal of $L$ generated by all
  coatoms other than $c$. The poset $M$ is a finite meet-semilattice with a maximum
  element and hence it is a lattice by \cite[Proposition 3.3.1]{StaEC1}. Since $L$
  is graded of rank $n$, so is $M$ and the set of $(n-1)$-element chains of
  $\Delta(\bar{M})$ coincides with that of $\Delta(\bar{L} \sm \{c\})$, where $\bar{M}
  = M \sm \{ \hat{0}, \hat{1} \}$ is the proper part of $M$. The last statement and
  (\ref{eq:L-c}) imply that
    $$  \widetilde{H}_{n-2} (\Delta(\bar{M}); \kk) \cong \widetilde{H}_{n-2}
        (\Delta(\bar{L} \sm \{c\}); \kk) \ne 0. $$
  Clearly, all coatoms of $M$ are bad. Since $M$ has one coatom less than $L$, we
  have arrived at the desired contradiction.

  \medskip
  \noindent {\sf Step 2:} Assume that $n \ge 2$ and let $b$ be any coatom of $L$.
  We show that there exists an atom $a$ of $L$ which is not comparable to $b$
  and satisfies $\widetilde{H}_{n-3} (\Delta (a, \hat{1}); \kk) \ne 0$. Arguing by
  contradiction, once again, suppose that no such atom exists. Suppose further that
  the number of atoms of $L$ which do not belong to the interval $[\hat{0}, b]$ is
  as small as possible for a graded lattice $L$ of rank $n$ and coatom $b$ which
  have this property and satisfy the assumptions of Theorem \ref{thm:flag-f}. Since
  $\Delta (\bar{L})$
  is non-acyclic over $\kk$, the Crosscut Theorem of Rota \cite[Theorem 10.8]{Bj}
  implies that there exists at least one atom of $L$ which does not belong to the
  interval $[\hat{0}, b]$. Let $a$ be any such atom and let $M$ be the subposet of
  $L$ consisting of $\hat{0}$ and the elements of the dual order ideal of $L$
  generated by the atoms other than $a$. The arguments in Step 1, applied to the
  dual of $L$, show that $M$ is a graded lattice of rank $n$ which satisfies
  $\widetilde{H}_{n-2} (\Delta(\bar{M}); \kk) \cong \widetilde{H}_{n-2} (\Delta
  (\bar{L}); \kk) \ne 0$. Since $M \sm (\hat{0}, b]$ has one atom less than $L \sm
  (\hat{0}, b]$, this contradicts our assumptions on $L$ and $b$.

  \medskip
  \noindent {\sf Step 3:} We now show that $L$ has at least $n$ good coatoms by
  induction on $n$. The statement is trivial for $n=1$, so suppose that $n \ge 2$.
  By replacing $L \sm \{ \hat{1} \}$ with its order ideal generated by the good
  coatoms, as in Step 1, we may assume that all coatoms of $L$ are good. Let $b$
  be any coatom of $L$. By Step 2, there exists an atom $a$ of $L$ which is not
  comparable to $b$ and satisfies $\widetilde{H}_{n-3} (\Delta (a, \hat{1}); \kk)
  \ne 0$. The interval $[a, \hat{1}]$ in $L$ is a graded lattice of rank $n-1$ to
  which the induction hypothesis applies. Therefore, it has at least $n-1$ coatoms
  and all of these are different from $b$. It follows that $L$ has at least $n$
  coatoms, all of which are good.

  \medskip
  \noindent {\sf Step 4:} We prove the following: Given any integers $0 \le r \le
  k \le n$ and any order ideal $I$ of $L \sm \{ \hat{1} \}$ generated by at most
  $r$ elements, there exist at least ${n-r \choose k-r}$ good elements of $L$ of
  rank $k$ which do not belong to $I$. The special case $r=0$ of this statement,
  in which $I$ is the empty ideal, is equivalent to the proposition. Thus, it
  suffices to prove the statement.

  We proceed by induction on $n$ and $n-r$, in this order. The statement is trivial
  for $n = 1$ and for $r=n$, so we assume that $n \ge 2$ and $0 \le r \le n-1$.
  Consider an order ideal $I$ of $L \sm \{ \hat{1} \}$ generated by at most $r$
  elements and let $k$ be an integer in the range $r \le k \le n$. Since $I$ contains
  at most $r \le n-1$ coatoms of $L$, Step 3 imples that there exists a good coatom,
  say $b$, of $L$ which does not belong to $I$. The interval $[\hat{0}, b]$ of $L$
  is a graded lattice of rank $n-1$ whose proper part has nontrivial top-dimensional
  reduced homology over $\kk$. Moreover, the intersection $I \cap [\hat{0}, b]$ is
  an order ideal of $[\hat{0}, b)$ which is generated by at most $r$ elements, namely
  the meets of $b$ with the maximal elements of $I$. By our induction on $n$, there
  exist at least ${n-r-1 \choose k-r}$ good elements of $[\hat{0}, b]$ of rank $k$
  which do not belong to $I$. The union $J = I \cup [\hat{0}, b]$ is an order ideal
  of $L \sm \{ \hat{1} \}$ which is generated by at most $r+1$ elements. By our
  induction on $n-r$, there exist at least ${n-r-1 \choose k-r-1}$ good elements of
  $L$ of rank $k$ which do not belong to $J$. We conclude that there exist at least
  ${n-r-1 \choose k-r} + {n-r-1 \choose k-r-1} = {n-r \choose k-r}$ good elements of
  $L$ of rank $k$ which do not belong to $I$. This completes the inductive step and
  the proof of the statement.
  \end{proof}

  \noindent
  \emph{Proof of Theorem \ref{thm:flag-f}}. We proceed by induction on $n$. The
  result is trivial for $n = 1$ and for $S = \varnothing$, so we assume that $n \ge
  2$ and choose a nonempty subset $S$ of $[n-1]$. We denote by $k$ the largest element
  of $S$ and observe that $f_L (S)$ is equal to the number of pairs $(x, \cC)$, where
  $x$ is an element of $L$ of rank $k$ and $\cC$ is a chain in the interval $[\hat{0},
  x]$, such that the set of ranks of the elements of $\cC$ is equal to $S \sm \{k\}$.
  By Proposition \ref{prop:f}, there are at least ${n \choose k}$ good elements $x$
  of rank $k$ in $L$ and each of the intervals $[\hat{0}, x]$ is a graded lattice of
  rank $k$ whose proper part has nontrivial top-dimensional reduced homology over
  $\kk$. Thus, the induction hypothesis applies to these intervals and we may
  conclude that
    $$ f_L (S) \, \ge \, {n \choose k} \alpha_k (S \sm \{k\}) \, = \, \alpha_n (S). $$
  This completes the induction and the proof of the theorem.
  \qed

  \medskip
  We end with a note on the case of equality in (\ref{eq:minf}). It was shown
  in \cite{Me} that every lattice $L$ which satisfies $\widetilde{H}_{n-2}
  (\Delta(\bar{L}); \ZZ) \ne 0$ and has cardinality $2^n$ must be isomorphic to
  the Boolean algebra $B_n$. As a result, if equality holds in (\ref{eq:minf})
  for every singleton $S \subseteq [n-1]$, then $L$ is isomorphic to $B_n$. Using
  the arguments in this section, as well as induction on $n$ and $k$, the following 
  statement has been verified by Kolins and Klee \cite{KK}: if $L$ satisfies the 
  assumptions of Theorem \ref{thm:flag-f} and for some $k \in \{1, 2,\dots,n-1\}$ 
  equality holds in (\ref{eq:minf}) for every subset $S$ of $[n-1]$ of cardinality $k$, 
  then $L$ is isomorphic to the Boolean algebra of rank $n$.


\begin{thebibliography}{xxx}
  %
  \bibitem{Ath}
  C.A.~Athanasiadis,
  \textit{Some combinatorial properties of flag simplicial pseudomanifolds and spheres},
  Ark. Mat. {\bf~49} (2011), 17--29.
  %
  \bibitem{BE}
  L.J.~Billera and R.~Ehrenborg,
  \textit{Monotonicity of the cd-index for polytopes},
  Math. Z. {\bf~233} (2000), 421--441.
  %
  \bibitem{BER}
  L.J.~Billera, R.~Ehrenborg and M.~Readdy,
  \textit{The {\bf c-2d}-index of oriented matroids},
  J. Combin. Theory Series A {\bf~80} (1997), 79--105.
  %
  \bibitem{BH}
  L.J.~Billera and G.~Hetyei,
  \textit{Linear inequalities for flags in graded partially ordered sets},
  J. Combin. Theory Series A {\bf~89} (2000), 77--104.
  %
  \bibitem{Bj}
  A.~Bj\"orner,
  \emph{Topological methods},
  in \emph{Handbook of Combinatorics} (R.L.~Graham, M.~Gr\"otschel
  and L.~Lov\'asz, eds.), North Holland, Amsterdam, 1995,
  pp.~1819--1872.
  %
  \bibitem{EK}
  R.~Ehrenborg and K.~Karu,
  \textit{Decomposition theorem for the cd-index of Gorenstein* posets},
  J. Algebraic Combin. {\bf~26} (2007), 225--251.
  %
  \bibitem{GKN}
  M.~Goff, S.~Klee and I.~Novik,
  \textit{Balanced complexes and complexes without large missing faces},
  Ark. Mat. (to appear), {\tt arXiv:0907.1669}.
  %
  \bibitem{KK}
  S.R.~Kolins and S.~Klee,
  Personal communication, February 7, 2011.
  %
  \bibitem{Me0}
  R.~Meshulam,
  \textit{Domination numbers and homology},
  J. Combin. Theory Series A {\bf~102} (2003), 321--330.
  %
  \bibitem{Me}
  R.~Meshulam,
  \textit{On the homological dimension of lattices},
  Order {\bf~25} (2008), 153--155.
  %
  \bibitem{Ne}
  E.~Nevo,
  \textit{Remarks on missing faces and generalized lower bounds on
  face numbers},
  Electron. J. Combin. {\bf~16} (2009), no. 2, Research Paper 8,
  11pp (electronic).
  %
  \bibitem{StaEC1}
  R.P.~Stanley,
  Enumerative Combinatorics, vol.~1,
  Wadsworth \& Brooks/Cole, Pacific Grove, CA, 1986;
  second printing,
  Cambridge Studies in Advanced Mathematics {\bf~49},
  Cambridge University Press, Cambridge, 1997.
  %
  \bibitem{StaCCA}
  R.P.~Stanley,
  Combinatorics and Commutative Algebra,
  second edition, Birkh\"auser, Basel, 1996.
  %
  \bibitem{StaPC}
  R.P.~Stanley,
  Personal communication, April 22, 2009.
  %
  \end{thebibliography}
  \end{document}